\documentclass[11pt,letterpaper]{article}

\usepackage[margin=1in]{geometry}
\usepackage{amsmath,amssymb,amsthm,mathtools}
\usepackage{microtype}
\usepackage{xcolor}
\usepackage{hyperref}

\hypersetup{
  colorlinks=true,
  linkcolor=blue!45!black,
  citecolor=blue!45!black,
  urlcolor=blue!55!black,
  pdftitle={The Skew Hamming Set-Pair Problem via Characteristic-Two Algebra},
  pdfauthor={Guorong Gao and Run Zhao},
  pdfsubject={Extremal set theory, Hamming spaces, and algebraic rank methods}
}

\newtheorem{theorem}{Theorem}[section]
\newtheorem{lemma}[theorem]{Lemma}
\theoremstyle{definition}
\newtheorem{definition}[theorem]{Definition}
\theoremstyle{remark}
\newtheorem{remark}[theorem]{Remark}

\newcommand{\dist}{\operatorname{dist}}
\newcommand{\F}{\mathbb{F}}

\title{Tight bound for the skew Hamming set-pair problem}
\author{Guorong Gao\thanks{This research is supported by National Key R\&D Program of China (Grant No.~2023YFA1010202), National Natural Science Foundation of China (Grant No.~12401448), and Natural Science Foundation of Fujian Province (Grant No.~2024J08030). Email addresses: \href{mailto:grgao@fzu.edu.cn}{grgao@fzu.edu.cn}, \href{mailto:runzhao0628@163.com}{runzhao0628@163.com}.}, Run Zhao\\\\
\small School of Mathematics and Statistics, Fuzhou University, Fuzhou, Fujian, China}
\date{}

\begin{document}

\maketitle

\begin{abstract}
Let $X$ be an alphabet, let $t\geq 0$ and $n\geq t+1$, and let $((a_i,b_i))_{i=1}^{m}$ be an ordered family of word pairs in $X^n$ satisfying $\dist(a_i,b_i)\geq t+1$ for every $i$ and $\dist(a_i,b_j)\leq t$ whenever $i<j$. We prove the sharp bound $m\leq 2^{t+1}$, thereby resolving a problem posed by Alon, Jin, and Sudakov. Our proof uses a linear-algebraic method based on a characteristic-two algebra, which may be of independent interest.
\end{abstract}

\section{Introduction and historical background}

\subsection{Set-pair inequalities}

For a positive integer $r$, write $[r]=\{1,\ldots,r\}$.
A \emph{set-pair system} is a family
\[
  \mathcal{P}=\{(A_i,B_i):i\in[m]\}
\]
such that $A_i\cap B_i=\varnothing$ for every $i$.
Here $m=|\mathcal{P}|$ is the number of ordered pairs in the system.
In the classical two-sided setting, one also assumes that $A_i\cap B_j\neq\varnothing$ whenever $i\neq j$.
Bollob\'as proved that every such system satisfies
\begin{equation}
  \sum_{i=1}^{m}
  \binom{|A_i|+|B_i|}{|A_i|}^{-1}
  \leq 1.
  \label{eq:bollobas}
\end{equation}
In particular, if $|A_i|=a$ and $|B_i|=b$ for all $i$, then
$m\leq \binom{a+b}{a}$ \cite{Bollobas1965}.
This inequality is a foundational instance of the set-pair method; see, for example, \cite{FranklTokushige2018,Tuza1994}.

The \emph{skew} condition requires $A_i\cap B_j\neq\varnothing$ only for $i<j$.
Lov\'asz's exterior-algebra argument shows that the same sharp uniform bound
$\binom{a+b}{a}$ holds under this one-sided condition; Kalai later obtained the result by related algebraic methods \cite{Lovasz1977,Kalai1984}.
F\"uredi proved the following common-intersection extension: if
\[
  |A_i|=a,\qquad |B_i|=b,\qquad |A_i\cap B_i|\leq c,
\]
and $|A_i\cap B_j|>c$ whenever $i<j$, then
\begin{equation}
  m\leq \binom{a+b-2c}{a-c}.
  \label{eq:furedi}
\end{equation}
This bound is also sharp \cite{Furedi1984}.
Further developments include nonuniform, weighted, subspace, and multipartite variants \cite{HegedusFrankl2023,Tuza1987}.

\subsection{The Hamming-space problem}

Let $X$ be an arbitrary alphabet and let $X^n$ denote the set of words
$x=(x_1,\ldots,x_n)$ of length $n$ over $X$.
The \emph{Hamming distance} between $x,y\in X^n$ is
\[
  \dist(x,y)=\bigl|\{k\in[n]:x_k\neq y_k\}\bigr|.
\]
The radius-$t$ Hamming ball centered at $x$ is
$B_t(x)=\{y\in X^n:\dist(x,y)\leq t\}$.

Questions about Helly numbers of Hamming balls arose in online learning with set-valued feedback \cite{RamanSubediTewari2023}.
Alon, Jin, and Sudakov subsequently proved that, for $n>t$ and $|X|\geq 2$, the Helly number of radius-$t$ Hamming balls in $X^n$ is exactly $2^{t+1}$ \cite{AlonJinSudakov}.
Their argument also yields the following two-sided set-pair statement: if
\[
  \dist(a_i,b_i)\geq t+1
  \quad\text{and}\quad
  \dist(a_i,b_j)\leq t\quad(i\neq j),
\]
then $m\leq 2^{t+1}$.

They asked whether the same bound remains valid under the one-sided assumption
\begin{equation}
  \dist(a_i,b_j)\leq t\qquad(i<j),
  \label{eq:skew-condition}
\end{equation}
This is the skew Hamming set-pair problem considered here.
F\"uredi's inequality \eqref{eq:furedi} gives the previously known estimate
\begin{equation}
  m\leq \binom{2t+2}{t+1}.
  \label{eq:previous-bound}
\end{equation}
Indeed, associate with each word $x\in X^n$ the $n$-element set
\[
  \widehat{x}=\{(k,x_k):k\in[n]\}.
\]
Then
$|\widehat{x}\cap\widehat{y}|=n-\dist(x,y)$.
The diagonal assumption gives
$|\widehat{a_i}\cap\widehat{b_i}|\leq n-t-1$, while
\eqref{eq:skew-condition} gives
$|\widehat{a_i}\cap\widehat{b_j}|\geq n-t$ for $i<j$.
Substituting $a=b=n$ and $c=n-t-1$ into \eqref{eq:furedi} yields \eqref{eq:previous-bound}.

\subsection{Our contribution}

We answer this problem affirmatively by proving the sharp bound.
The main ingredient is a low-dimensional bilinear representation whose zero pattern detects the threshold relation $\dist(x,y)\leq t$ exactly.
The representation is built in the commutative algebra
\[
  K[e_1,\ldots,e_d]/(e_1^2,\ldots,e_d^2),
\]
which has dimension $2^d$ over a field $K$ of characteristic two.
Characteristic two is used in two essential ways: every linear form in the $e_r$ squares to zero, and determinants agree with permanents.
Related algebraic ideas appear in determinantal sieving \cite{EibenKoanaWahlstrom2025}, while the final triangular-matrix argument is in the spirit of exterior-algebra proofs of skew set-pair inequalities \cite{Lovasz1977,HegedusFrankl2023}.

\section{Statement and sharpness}

\begin{definition}[Skew Hamming set-pair system]
\label{def:skew-hamming-system}
Let $t\in\mathbb{Z}_{\geq 0}$ and let $m,n\in\mathbb{Z}_{\geq 1}$ with $n\geq t+1$.
For an alphabet $X$, an ordered family
\[
  \mathcal{H}=((a_i,b_i))_{i=1}^{m},
  \qquad a_i,b_i\in X^n,
\]
is called a \emph{skew Hamming set-pair system at threshold $t$} if
\begin{align*}
  \dist(a_i,b_i)&\geq t+1 &&\text{for every }i\in[m],\\
  \dist(a_i,b_j)&\leq t   &&\text{whenever }1\leq i<j\leq m.
\end{align*}
The integer $m=|\mathcal{H}|$ is the number of ordered word pairs in the family and is the size parameter to be bounded.
\end{definition}

\begin{theorem}[Skew Hamming set-pair bound]
\label{thm:main}
Let $t\in\mathbb{Z}_{\geq 0}$ and let $m,n\in\mathbb{Z}_{\geq 1}$ with $n\geq t+1$.
Let $X$ be an alphabet, and let $a_i,b_i\in X^n$ for $i\in[m]$.
Assume that
\begin{align*}
  \dist(a_i,b_i)&\geq t+1 &&\text{for every }i\in[m],\\
  \dist(a_i,b_j)&\leq t   &&\text{whenever }1\leq i<j\leq m.
\end{align*}
Thus $\mathcal{H}=((a_i,b_i))_{i=1}^{m}$ is a skew Hamming set-pair system at threshold $t$ in the sense of Definition~\ref{def:skew-hamming-system}.
Then
\[
  m\leq 2^{t+1}.
\]
If $|X|\geq 2$, equality is attained already when $n=t+1$.
\end{theorem}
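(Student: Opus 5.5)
The plan is a dimension argument: embed the words into a vector space of dimension $2^{t+1}$ via a bilinear pairing that vanishes exactly on pairs at distance $\leq t$, then use a triangular-matrix argument. Set $d=t+1$ and work in
\[
A=K[e_1,\ldots,e_d]/(e_1^2,\ldots,e_d^2),
\]
where $K$ has characteristic two. Then $\dim_K A=2^d$. Let $\tau:A\to K$ be the coefficient of the top monomial $e_1\cdots e_d$, and define the symmetric bilinear form $\langle u,v\rangle=\tau(uv)$ on $A$.

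The field $K$ will be a purely transcendental extension of $\F_2$. Its indeterminates are a scalar $c_x$ for each symbol $x\in X$, and coefficients $\lambda_{k,r}$ for $k\in[n]$, $r\in[d]$; this also handles an infinite alphabet. Put $L_k=\sum_r\lambda_{k,r}e_r$. Since $K$ has characteristic two, every linear form squares to zero, so $L_k^2=0$. For a word $x$ define
\[
U(x)=\prod_{k=1}^n\bigl(1+c_{x_k}L_k\bigr)\in A .
\]
The key point of using a single linear form $L_k$ per coordinate is that each factor collapses nicely:
\[
(1+c_xL_k)(1+c_yL_k)=1+(c_x+c_y)L_k .
\]
In particular this factor equals $1$ when $x=y$, because $2c_x=0$. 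Hence, with $D=\{k:x_k\neq y_k\}$, we get
\[
U(x)U(y)=\prod_{k\in D}\bigl(1+(c_{x_k}+c_{y_k})L_k\bigr),
\]
and therefore
\[
\langle U(x),U(y)\rangle=\sum_{S\subseteq D,\,|S|=d}\ \prod_{k\in S}(c_{x_k}+c_{y_k})\cdot \tau\Bigl(\prod_{k\in S}L_k\Bigr).
\]
Moreover $\tau(\prod_{k\in S}L_k)$ is the permanent of the $d\times d$ matrix $(\lambda_{k,r})_{k\in S,\,r\in[d]}$, which equals its determinant in characteristic two.

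Two facts follow. If $\dist(x,y)=|D|\leq t<d$, the sum is empty, so $\langle U(x),U(y)\rangle=0$. If $|D|\geq d$, the expression is nonzero. To see this, view it as a polynomial in the $\lambda$'s. Distinct $d$-sets $S$ give determinants involving disjoint sets of variable rows, so their monomials are distinct and no cancellation occurs. Each coefficient $\prod_{k\in S}(c_{x_k}+c_{y_k})$ is nonzero because $x_k\neq y_k$ on $D$ and the $c$'s are independent indeterminates. This nonvanishing step is the only delicate point, and the independence of the indeterminates is what makes it work.

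Now form the $m\times m$ matrix $M_{ij}=\langle U(a_i),U(b_j)\rangle$. By hypothesis $\dist(a_i,b_j)\leq t$ for $i<j$, so $M_{ij}=0$ there. Also $\dist(a_i,b_i)\geq t+1=d$, so every diagonal entry is nonzero. Thus $M$ is triangular with nonzero diagonal and hence nonsingular. This forces $U(a_1),\ldots,U(a_m)$ to be linearly independent in $A$, so $m\leq 2^{t+1}$.

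For sharpness, take $n=t+1$ and $\{0,1\}\subseteq X$. Let $a_1,\ldots,a_{2^{t+1}}$ be all words in $\{0,1\}^{t+1}$, and let $b_i$ be the complement of $a_i$ (swap $0$ and $1$ in every coordinate). Then $\dist(a_i,b_i)=t+1$. For $i\neq j$, a coordinate contributes to $\dist(a_i,b_j)$ exactly when $a_i$ and $a_j$ agree there, so
\[
\dist(a_i,b_j)=(t+1)-\dist(a_i,a_j)\leq t .
\]
This even satisfies the two-sided condition, so equality $m=2^{t+1}$ is attained.
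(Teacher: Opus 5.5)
Your proof is correct and is essentially the paper's argument: the same $2^{t+1}$-dimensional characteristic-two algebra $K[e_1,\ldots,e_d]/(e_1^2,\ldots,e_d^2)$, the same encoding $\prod_k(1+c_{x_k}L_k)$ with the top-coefficient pairing, the same non-cancellation argument for the determinant sum over $d$-subsets of $D$, the same triangular matrix, and the same complement construction for sharpness. The differences are cosmetic: you use independent indeterminates $c_x$ where the paper injectively codes the finitely many occurring symbols into $\F_2(s)$, you conclude via linear independence of the $U(a_i)$ rather than the factorization $M=RHC^{\mathsf T}$, and your phrase ``disjoint sets of variable rows'' should say distinct row sets, which is all you need since each monomial determines its $S$.
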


\begin{proof}[\textbf{Sharpness}]
Set $d=t+1$ and take $n=d$.
Choose two symbols in $X$, denoted by $0$ and $1$.
List the $2^d$ binary words of length $d$ as
$a_1,\ldots,a_{2^d}$, and let $b_i=\overline{a_i}$ be the coordinatewise complement of $a_i$.
Then $\dist(a_i,b_i)=d$ for every $i$.
Moreover, if $i\neq j$, then
\[
  \dist(a_i,b_j)
  =\dist(a_i,\overline{a_j})
  =d-\dist(a_i,a_j)
  \leq d-1=t.
\]
Thus the stronger two-sided cross-condition holds, and
$m=2^d=2^{t+1}$.
\end{proof}

The upper bound will follow from a bilinear kernel with exactly the required zero pattern.

\section{The characteristic-two algebraic kernel}

Set $d=t+1$.
Only finitely many symbols occur in the given family; let $\Sigma\subseteq X$ be their set.
Choose an infinite field $K_0$ of characteristic two and an injective coding map
\[
  c:\Sigma\longrightarrow K_0.
\]
Such a choice is always possible: for example, one may take the rational function field
$K_0=\F_2(s)$ and choose distinct elements for the finitely many symbols in $\Sigma$.

Let $z_{k,r}$, for $k\in[n]$ and $r\in[d]$, be algebraically independent over $K_0$, and work over the rational function field
\[
  K=K_0(z_{k,r}:k\in[n],\ r\in[d]).
\]
Consider the graded commutative $K$-algebra
\begin{equation}
  \mathcal{A}=K[e_1,\ldots,e_d]/(e_1^2,\ldots,e_d^2).
  \label{eq:algebra}
\end{equation}
For $S\subseteq[d]$, write
$e_S=\prod_{r\in S}e_r$, with $e_{\varnothing}=1$.
The squarefree monomials form a basis:
\begin{equation}
  \{e_S:S\subseteq[d]\}\text{ is a basis of }\mathcal{A},
  \qquad \dim_K\mathcal{A}=2^d.
  \label{eq:basis}
\end{equation}
For each coordinate $k\in[n]$, define
\begin{equation}
  u_k=\sum_{r=1}^{d}z_{k,r}e_r\in\mathcal{A}.
  \label{eq:uk}
\end{equation}

\begin{lemma}
\label{lem:square-zero}
For every $k\in[n]$, one has $u_k^2=0$.
\end{lemma}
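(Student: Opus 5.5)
The plan is to expand the square directly in the commutative algebra $\mathcal{A}$ from \eqref{eq:algebra}, using the expression for $u_k$ in \eqref{eq:uk}. Since $\mathcal{A}$ is commutative, we have
\[
  u_k^2=\Bigl(\sum_{r=1}^{d}z_{k,r}e_r\Bigr)^2
  =\sum_{r=1}^{d}z_{k,r}^2e_r^2+\sum_{1\leq r<s\leq d}2\,z_{k,r}z_{k,s}\,e_re_s .
\]
The expansion therefore splits into a diagonal part and an off-diagonal part, and it suffices to show that each part vanishes.

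The diagonal terms vanish because $e_r^2=0$ holds in $\mathcal{A}$ by definition of the quotient. For the off-diagonal terms, commutativity gives $e_re_s=e_se_r$, so the cross terms for each pair $r<s$ combine into the single term $2z_{k,r}z_{k,s}e_re_s$. This term is zero because $K$ contains $K_0$, which has characteristic two, so $2=0$ in $K$.

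Equivalently, the map $x\mapsto x^2$ is additive on any commutative algebra of characteristic two (the Frobenius map). Hence $u_k^2=\sum_r z_{k,r}^2e_r^2=0$.

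There is no real obstacle here. The only point to state explicitly is that both hypotheses are used: commutativity, so that the cross terms pair up, and characteristic two, so that the doubled cross terms cancel. Without characteristic two (for instance over $\mathbb{Q}$), the cross terms $2z_{k,r}z_{k,s}e_re_s$ would survive, since the $e_{\{r,s\}}$ are basis elements by \eqref{eq:basis}.
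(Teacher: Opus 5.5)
Your proof is correct and is the paper's argument: expand the square using commutativity, then the diagonal terms vanish by $e_r^2=0$ and the doubled cross terms vanish because $2=0$ in $K$. (Your closing remark that the cross terms survive outside characteristic two needs $d\geq 2$; when $d=1$ there are no cross terms.)
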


\begin{proof}
Using commutativity, the relations $e_r^2=0$, and the identity $2=0$ in $K$, we obtain
\[
  u_k^2
  =\sum_{r=1}^{d}z_{k,r}^{2}e_r^2
   +2\sum_{1\leq r<s\leq d}z_{k,r}z_{k,s}e_re_s
  =0.
\]
\end{proof}

Encode a word $x\in\Sigma^n$ as
\begin{equation}
  P(x)=\prod_{k=1}^{n}\bigl(1+c(x_k)u_k\bigr)\in\mathcal{A}.
  \label{eq:encoding}
\end{equation}
Let $L:\mathcal{A}\to K$ be the linear map that extracts the coefficient of the top-degree monomial
$e_{[d]}=e_1\cdots e_d$.
Define a bilinear form $\beta:\mathcal{A}\times\mathcal{A}\to K$ by
\begin{equation}
  \beta(p,q)=L(pq).
  \label{eq:beta}
\end{equation}
In the squarefree basis,
\[
  \beta(e_S,e_T)=
  \begin{cases}
    1,&T=[d]\setminus S,\\
    0,&\text{otherwise}.
  \end{cases}
\]
Thus the matrix of $\beta$ is a permutation matrix.

\begin{lemma}[Exact zero pattern]
\label{lem:zero-pattern}
For all $a,b\in\Sigma^n$,
\[
  \beta(P(a),P(b))=0
  \quad\Longleftrightarrow\quad
  \dist(a,b)\leq d-1.
\]
Equivalently,
\[
  \beta(P(a),P(b))=
  \begin{cases}
    0,&\dist(a,b)\leq d-1,\\
    \neq 0,&\dist(a,b)\geq d.
  \end{cases}
\]
\end{lemma}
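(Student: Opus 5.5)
We compute $P(a)P(b)$ coordinatewise and then read off its top-degree coefficient. Since $\mathcal{A}$ is commutative,
\[
  P(a)P(b)=\prod_{k=1}^{n}\bigl(1+c(a_k)u_k\bigr)\bigl(1+c(b_k)u_k\bigr).
\]
By Lemma~\ref{lem:square-zero}, $u_k^2=0$, so each factor equals $1+(c(a_k)+c(b_k))u_k$. Let $D=\{k\in[n]:a_k\neq b_k\}$. If $k\notin D$, then $c(a_k)+c(b_k)=2c(a_k)=0$ in characteristic two. If $k\in D$, injectivity of $c$ gives $\lambda_k:=c(a_k)+c(b_k)\neq 0$. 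Therefore
\[
  P(a)P(b)=\prod_{k\in D}(1+\lambda_k u_k)=\sum_{J\subseteq D}\Bigl(\prod_{k\in J}\lambda_k\Bigr)\prod_{k\in J}u_k .
\]
Each $u_k$ is homogeneous of degree one, so $\prod_{k\in J}u_k$ is homogeneous of degree $|J|$. Only the terms with $|J|=d$ can contribute to the coefficient of $e_{[d]}$. If $\dist(a,b)=|D|\leq d-1$, there are no such terms and $\beta(P(a),P(b))=0$.

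Now suppose $|D|\geq d$. For $|J|=d$, expanding $\prod_{k\in J}\sum_r z_{k,r}e_r$ and keeping only the squarefree terms gives
\[
  L\Bigl(\prod_{k\in J}u_k\Bigr)=\sum_{\text{bijections }\sigma:J\to[d]}\ \prod_{k\in J}z_{k,\sigma(k)}=\operatorname{per}(Z_J)=\det(Z_J),
\]
where $Z_J=(z_{k,r})_{k\in J,r\in[d]}$ and permanent equals determinant because the characteristic is two. Hence
\[
  \beta(P(a),P(b))=\sum_{J\subseteq D,\ |J|=d}\Bigl(\prod_{k\in J}\lambda_k\Bigr)\det(Z_J).
\]
This is a polynomial in the indeterminates $z_{k,r}$ with coefficients in $K_0$, and it remains to show that it is nonzero.

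This nonvanishing is the main step. I would argue that distinct sets $J$ give determinants $\det(Z_J)$ supported on disjoint monomial sets, because each monomial $\prod_{k\in J}z_{k,\sigma(k)}$ records exactly which rows $k$ occur, and so determines $J$. Also, each $\det(Z_J)$ is a nonzero polynomial: its monomials are distinct for distinct $\sigma$ and carry coefficient $1$. Consequently no cancellation can occur between different $J$. Pick any $d$-subset $J\subseteq D$, which exists since $|D|\geq d$. Its contribution is nonzero because every $\lambda_k\neq 0$ in $K_0$. Since the $z_{k,r}$ are algebraically independent over $K_0$, the whole sum is a nonzero element of $K$, which gives $\beta(P(a),P(b))\neq 0$ whenever $\dist(a,b)\geq d$.
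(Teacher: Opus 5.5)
Your proof is correct and follows the paper's argument essentially step for step. You collapse $P(a)P(b)$ to $\prod_{k\in D}(1+\lambda_k u_k)$ using $u_k^2=0$ and characteristic two, read off the top coefficient as $\sum_{J}\bigl(\prod_{k\in J}\lambda_k\bigr)\det Z_J$, and rule out cancellation because monomials coming from different row sets $J$ are distinct; the only cosmetic difference is that the paper exhibits the single monomial $z_{k_1,1}\cdots z_{k_d,d}$ for a fixed $S_0$, whereas you combine disjoint monomial supports with $\det Z_J\neq 0$, which amounts to the same thing.
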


\begin{proof}
Let
\[
  D=\{k\in[n]:a_k\neq b_k\}.
\]
For each coordinate $k$, Lemma~\ref{lem:square-zero} gives
\begin{align*}
  \bigl(1+c(a_k)u_k\bigr)\bigl(1+c(b_k)u_k\bigr)
  &=1+\bigl(c(a_k)+c(b_k)\bigr)u_k+c(a_k)c(b_k)u_k^2\\
  &=1+\bigl(c(a_k)+c(b_k)\bigr)u_k.
\end{align*}
If $a_k=b_k$, the linear coefficient is $2c(a_k)=0$.
If $a_k\neq b_k$, injectivity of $c$ and characteristic two imply that
\[
  \alpha_k:=c(a_k)+c(b_k)\neq 0.
\]
By commutativity, we may group the factors coordinatewise.
Consequently,
\begin{equation}
  P(a)P(b)=\prod_{k\in D}(1+\alpha_k u_k).
  \label{eq:collapsed-product}
\end{equation}

If $|D|\leq d-1$, the right-hand side of \eqref{eq:collapsed-product} has degree at most $d-1$ in the variables $e_1,\ldots,e_d$.
Its $e_{[d]}$ coefficient is therefore zero.

Now suppose that $|D|\geq d$.
For $S=\{k_1<\cdots<k_d\}\subseteq D$, define
\[
  Z_S=(z_{k_i,r})_{i,r\in[d]}.
\]
The degree-$d$ part of \eqref{eq:collapsed-product} gives
\begin{equation}
  \beta(P(a),P(b))
  =\sum_{\substack{S\subseteq D\\|S|=d}}
    \left(\prod_{k\in S}\alpha_k\right)\det Z_S.
  \label{eq:determinant-sum}
\end{equation}
Indeed, the coefficient of $e_1\cdots e_d$ in
$\prod_{k\in S}u_k$ is the permanent of $Z_S$.
In characteristic two, every sign equals $1$, so the permanent and determinant coincide.

It remains to show that the sum in \eqref{eq:determinant-sum} does not cancel.
Fix $S_0=\{k_1<\cdots<k_d\}\subseteq D$.
The monomial
\[
  z_{k_1,1}z_{k_2,2}\cdots z_{k_d,d}
\]
occurs exactly once in $\det Z_{S_0}$, in the term indexed by the identity permutation, and its coefficient in \eqref{eq:determinant-sum} is
$\prod_{k\in S_0}\alpha_k\neq 0$.
A determinant indexed by a different row set uses a different collection of row variables and cannot contain this monomial.
Thus the right-hand side of \eqref{eq:determinant-sum} is a nonzero polynomial in
$K_0[z_{k,r}:k\in[n],\ r\in[d]]$, and hence is nonzero in its field of fractions $K$.
\end{proof}

\begin{remark}
\label{rem:indeterminates}
The algebraically independent variables serve only to prevent cancellation in
\eqref{eq:determinant-sum}; no probabilistic argument is involved.
The upper-bound proof requires only the resulting kernel over the rational function field $K$.
\end{remark}

\section{The triangular rank argument}

\begin{proof}[Proof of the upper bound in Theorem~\ref{thm:main}]
Set $d=t+1$, and construct $K$, $\mathcal{A}$, $P$, and $\beta$ as in Section~3.
Form the $m\times m$ matrix over $K$ given by
\begin{equation}
  M_{ij}=\beta(P(a_i),P(b_j)).
  \label{eq:M}
\end{equation}
If $i<j$, then the skew hypothesis gives
$\dist(a_i,b_j)\leq t=d-1$; hence Lemma~\ref{lem:zero-pattern} gives $M_{ij}=0$.
On the diagonal,
$\dist(a_i,b_i)\geq t+1=d$, so $M_{ii}\neq 0$.
Thus $M$ is lower triangular with nonzero diagonal, and therefore
\begin{equation}
  \operatorname{rank}_K M=m.
  \label{eq:rank-lower}
\end{equation}

To obtain the complementary upper bound, order the basis
$(e_S)_{S\subseteq[d]}$ of $\mathcal{A}$.
Let $R$ be the $m\times 2^d$ matrix whose $i$th row is the coordinate vector of $P(a_i)$, and let $C$ be the analogous matrix for the vectors $P(b_j)$.
Let $H$ be the $2^d\times 2^d$ matrix of $\beta$ in this basis.
Bilinearity gives
\begin{equation}
  M=RHC^{\mathsf T}.
  \label{eq:factorization}
\end{equation}
Consequently,
\[
  \operatorname{rank}_K M\leq 2^d=2^{t+1}.
\]
Together with \eqref{eq:rank-lower}, this proves $m\leq 2^{t+1}$. The proof is complete.
\end{proof}

\begin{remark}
Lemma~\ref{lem:zero-pattern} converts the one-sided distance assumptions into a triangular zero pattern, while the $2^d$-dimensional algebra limits the rank of every matrix obtained from the kernel.
Neither the word length $n$ nor the size of the ambient alphabet enters the final bound; only the finitely many symbols appearing in the family must be encoded.
\end{remark}

\section*{Declaration on the use of generative AI}

The authors used generative AI tools to assist in discussing proof strategies, checking proofs, and
improving the exposition. The authors take full responsibility for the mathematical arguments, results,
and conclusions, all of which were carefully reviewed and verified by them.

\end{document}